\newtheorem{theorem}{Theorem}[section]
\newtheorem{corollary}[theorem]{Corollary}
\newtheorem{lemma}[theorem]{Lemma}
\newtheorem{proposition}[theorem]{Proposition}
\newtheorem{question}[theorem]{Question}
\theoremstyle{definition}
\newtheorem{definition}[theorem]{Definition}
\newtheorem{remark}[theorem]{Remark}
\newtheorem{example}[theorem]{Example}
\numberwithin{equation}{section}
\begin{document}


\baselineskip=17pt


\title{Amenability constants of central Fourier algebras of finite groups}

\author{John Sawatzky\\
University of Waterloo\\
Department of Pure Mathematics\\
Waterloo, Ontario, N2L 3G1, Canada\\
E-mail: jmsawatz@uwaterloo.ca
}

\date{}

\maketitle


\renewcommand{\thefootnote}{}

\footnote{2020 \emph{Mathematics Subject Classification}: Primary 43A30; Secondary 20C15.}

\footnote{\emph{Key words and phrases}: amenability constant, character degrees, Fourier algebra, amenable Banach algebra.}

\renewcommand{\thefootnote}{\arabic{footnote}}
\setcounter{footnote}{0}


\begin{abstract}
    We consider amenability constants of the central Fourier algebra $ZA(G)$ of a finite group $G$. This is a dual object to $ZL^1(G)$ in the sense of hypergroup algebras, and as such shares similar amenability theory. We will provide several classes of groups where $AM(ZA(G)) = AM(ZL^1(G))$, and discuss $AM(ZA(G))$ when $G$ has two conjugacy class sizes. We also produce a new counterexample that shows that unlike $AM(ZL^1(G))$, $AM(ZA(G))$ does not respect quotient groups, however the class of groups that does has $\frac{7}{4}$ as the sharp amenability constant bound.
\end{abstract}
\newpage
\section{Background and Notation}
\subsection{Banach Algebra Amenability}
The notion of amenability of a Banach algebra was first introduced by B.E. Johnson in \cite{Johnson1}, and has been a fruitful area of research ever since. A brief introduction is offered here, but we recommend \cite{Runde2} for a more detailed explanation. Let $\mathcal{A}$ be a Banach algebra. We denote the projective tensor product of $\mathcal{A}$ with itself by $\mathcal{A} \hat{\otimes} \mathcal{A}$. There are natural module actions of $\mathcal{A}$ on $\mathcal{A} \hat{\otimes} \mathcal{A}$ that are given by $a \cdot (b \otimes c) = (ab) \otimes c$ and $(b \otimes c) \cdot a = b \otimes (ca)$, where $a, b, c \in \mathcal{A}$. The multiplication map $m: \mathcal{A} \hat{\otimes} \mathcal{A} \rightarrow \mathcal{A}$ acts on elementary tensors by $m(a \otimes b) = ab$. 
\begin{definition}
 A \textit{bounded approximate diagonal} (b.a.d.) for $\mathcal{A}$ is a bounded net $(d_\alpha)_\alpha$ in $\mathcal{A} \hat{\otimes} \mathcal{A}$ such that $a \cdot d_\alpha - d_\alpha \cdot a \rightarrow 0$ and $a m(d_\alpha) \rightarrow a$, where $a \in \mathcal{A}$. 
\\
Similarly, a \textit{virtual diagonal} for $\mathcal{A}$ is defined as an element $D \in (\mathcal{A} \hat{\otimes} \mathcal{A})^{**}$ such that $a \cdot D = D \cdot a$ and $a \cdot m^{**}D = a$, where $a \in \mathcal{A}$.
\end{definition}
\begin{remark}
A copy of $\mathcal{A} \hat{\otimes} \mathcal{A}$ can be identified as living inside of $(\mathcal{A} \hat{\otimes} \mathcal{A})^{**}$. If there is a virtual diagonal which is an element of $\mathcal{A} \hat{\otimes} \mathcal{A} \subseteq (\mathcal{A} \hat{\otimes} \mathcal{A})^{**}$ then we call it a \textit{diagonal}. In the case that $\mathcal{A}$ is a finite-dimensional commutative amenable Banach algebra, then $\mathcal{A}$ possesses a unique diagonal \cite{Ghandehari}.
\end{remark}
For a given Banach algebra $\mathcal{A}$, the existence of a bounded approximate diagonal is equivalent to the existence of a virtual diagonal. If this condition is satisfied then we say that $\mathcal{A}$ is an \textit{amenable Banach algebra}. The term "amenable" here does have a relationship with amenability in groups, as demonstrated by the pleasing result of Johnson \cite{Johnson1} that the group algebra $L^1(G)$ of a locally compact group $G$ is amenable if and only if $G$ is amenable, in which case $L^1(G)$ has an approximate diagonal with norm bounded by $1$. This example begs the question of whether every amenable Banach algebra automatically has a b.a.d. bounded by $1$, but as implied by the next definition this need not be the case.
\begin{definition}
For a Banach algebra $\mathcal{A}$ we denote the \textit{amenability constant} of $\mathcal{A}$ by
\begin{center}
$AM(\mathcal{A}) = \inf \{ \sup\limits_{\alpha} \|\omega_\alpha\|: (\omega_\alpha) $ is a b.a.d. for $\mathcal{A}\}    $.
\end{center}
If $\mathcal{A}$ is not amenable then we let $AM(\mathcal{A}) = \infty$.
\end{definition}
It is clear that amenability constants, if they exist, must be greater or equal to $1$. We will largely be interested in the behavior of amenability constants that are close but not equal to $1$.
\begin{definition}
A class of Banach algebras $\mathcal{C}$ has an \textit{amenability gap} if there exists $\lambda > 1$ such that for $A \in \mathcal{C}$, $AM(A) = 1$ if and only if $AM(A) < \lambda$. Any $\lambda$ with this property is called an \textit{amenability bound}, and if $\lambda$ is the supremum of all possible amenability bounds then it is called the \textit{sharp bound}.
\end{definition}
\subsection{Fourier Algebra}
The Fourier algebra $A(G)$ of a locally compact group $G$ is a Banach algebra that was first introduced by Eymard in 1964 in \cite{Eymard}. A modern introduction to this algebra can be found in \cite{Kaniuth}. The amenability constant theory of $A(G)$ was originally studied in \cite{Johnson2}, where it was shown that if $G$ is finite then
\[
AM(A(G)) = \frac{1}{|G|}\sum\limits_{\chi \in \textrm{Irr}(G)} d_\chi^3.
\]
Representation theory proves to be an invaluable tool for studying amenability constants; a good reference for character theory on finite groups is \cite{Isaacs}. It follows from the basic fact that $\sum\limits_{\chi \in \textrm{Irr}(G)} d_\chi^2 = |G|$ that $AM(A(G)) = 1$ if and only if $G$ is abelian. While this is less obvious if $G$ is infinite, it was shown by Runde in \cite{Runde1} that this property holds for all locally compact groups. In a recent preprint by Choi \cite{Choi2}, it was proven that $\frac{3}{2}$ is the sharp amenability bound for $A(G)$ for locally compact $G$. A key feature of the amenability theory of $A(G)$ is that if $H \leqslant G$ is a closed subgroup then $AM(A(H)) \leq AM(A(G))$. This comes from the important fact that the restriction map on the Fourier algebra is surjective. 
\subsection{$ZL^1(G)$ and $ZA(G)$}
While $L^1(G)$ does not have an amenability constant gap, it turns out that its centre, $ZL^1(G)$, does. From here on we will assume that all groups $G$ are finite. We can write $ZL^1(G) = \overline{\text{span}}^{L^1(G)} \textrm{Irr}(G)$, so the algebra depends only on the character table of $G$. By \cite{Azimifard} $AM(ZL^1(G))$ can be calculated using the following nice formula:
\[
AM(ZL^1(G)) = \frac{1}{|G|^2} \sum_{C,C' \in \textrm{Conj}(G)} |C| |C'| \left|\sum_{\chi \in \textrm{Irr}(G)}d_\chi^2 \chi_\pi(C)\overline{\chi_\pi(C')}\right|.
\]
If we take the $A(G)$ norm of the irreducible characters instead of the $L^1$ norm, we get the central Fourier algebra $ZA(G) = \overline{\text{span}}^{A(G)} \textrm{Irr}(G)$. As sets of functions both $ZL^1(G)$ and $ZA(G)$ form the class functions on $G$, albeit with different norms. Amenability properties of $ZA(G)$ were studied in \cite{Alag2}, where it was shown that the amenability constant can be calculated using a formula that is dual to the $ZL^1(G)$ case:
\[
AM(ZA(G)) = \frac{1}{|G|^2}\sum_{\chi, \chi' \in \textrm{Irr}(G)}d_\chi d_{\chi'} \left |\sum_{C \in \textrm{Conj}(G)} |C|^2 \chi(C)\overline{\chi'(C)}\right|.
\]
For convenience we will set $AMZA(G) = AM(ZA(G))$ and $AMZL(G) = AM(ZL^1(G))$. Note that if you choose to only sum up the elements where $\chi = \chi'$ in the above formula then you get the same value as you get from only summing up the elements where $C = C'$ in the formula for $AMZL(G)$. We will call this quantity the \textit{auxiliary minorant} of $AMZA(G)$ (or equivalently, of $AMZL(G)$). Matching the notation from \cite{Choi1}, we denote it as follows:
\[
\textrm{ass}(G) = \frac{1}{|G|^2}\sum_{\chi \in \textrm{Irr}(G)}d_\chi^2 \sum_{C \in \textrm{Conj}(G)} |C|^2 |\chi(C)|^2.
\]
It will often make sense to split up a calculation between $\textrm{ass}(G)$ and $AMZA(G) - \textrm{ass}(G)$, which we denote as $AMZA_{\textrm{off}}(G)$, and can be written as
\[
AMZA_{\textrm{off}}(G) = \frac{1}{|G|^2}\sum_{\chi \neq \chi' \in \textrm{Irr}(G)}d_\chi d_{\chi'} \left|\sum_{C \in \textrm{Conj}(G)} |C|^2 \chi(C)\overline{\chi'(C)}\right|.
\]
The classes of Banach algebras described by $ZL^1(G)$ and $ZA(G)$ for finite $G$ are both known to possess amenability gaps. The first of these was shown in 2008 by Aimifard, Samei, and Spronk in \cite{Alag3}, which used a result of Rider \cite{Rider} to get that $1 + \frac{1}{300}$ is an amenability bound for $ZL^1(G)$. In 2016 Alaghmandan and Spronk \cite{Alag2} used methods involving completely bounded multipliers to get an amenability bound of $\frac{2}{\sqrt{3}}$ for $ZA(G)$. Shortly afterwords Choi \cite{Choi1} resolved the question in the case of $ZL^1(G)$ by proving that $\frac{7}{4}$ is the sharp amenability bound. The sharp amenability bound has not yet been found for $ZA(G)$, although computational results suggest that $\frac{7}{4}$ will be the sharp bound just like with $ZL^1(G)$. Similarly to $A(G)$, both $ZL^1(G)$ and $ZA(G)$ possess the property that the amenability constant is equal to $1$ if and only if $G$ is abelian.
\\\\
The duality between $ZA(G)$ and $ZL^1(G)$ is no coincidence, the amenability properties of these algebras are deeply intrinsically linked. As shown in \cite{Alag4}, they can be understood as dual hypergroup algebras, specifically $ZL^1(G) \cong \ell^1(\textrm{Conj}, \lambda_{\textrm{Conj}})$ and $ZA(G) \cong \ell^1(\hat{G}, \lambda_{\hat{G}})$, where $\lambda_{\textrm{Conj}}(C) = |C|$ and $\lambda_{\hat{G}}(\pi) = d_\pi^2$. Their amenability constant formulas are just special cases of the following:
\begin{theorem}[{\cite[Theorem 3.7]{Alag4}}]
Let $H$ be a finite commutative hypergroup with Haar measure $\lambda$. For $\chi \in \hat{H}$ let $k_\chi$ denote the hyperdimension of $\chi$. Then we have that
\[
AM(\ell^1(H,\lambda)) = \frac{1}{\lambda(H)^2}\sum\limits_{x,y \in H} \left|\sum_{\chi \in \hat{H}} k_\chi^2 \chi(x) \overline{\chi(y)}\right| \lambda(x) \lambda(y).
\]
\end{theorem}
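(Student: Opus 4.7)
The plan is to use the remark that a finite-dimensional commutative amenable Banach algebra possesses a unique diagonal, which automatically realizes the amenability constant, and to write this diagonal down explicitly in terms of hypergroup characters. The algebra $\ell^1(H,\lambda)$ is finite-dimensional, commutative, and semisimple (since $\hat{H}$ separates the points of $H$), hence amenable; it decomposes as the direct sum of the one-dimensional subalgebras $\mathbb{C}e_\chi$ indexed by $\hat{H}$. Each $\chi \in \hat{H}$ induces an algebra character $\omega_\chi(f)=\sum_{x\in H}f(x)\chi(x)\lambda(x)$, and using the orthogonality relation $\sum_{x\in H}\chi(x)\overline{\chi'(x)}\lambda(x)=\frac{\lambda(H)}{k_\chi}\delta_{\chi,\chi'}$ for characters of a finite commutative hypergroup (which implicitly fixes the normalization of the hyperdimension $k_\chi$), I would show that the minimal idempotents are
\[
e_\chi(x)=\frac{k_\chi\,\overline{\chi(x)}}{\lambda(H)},\qquad \chi\in\hat{H},
\]
and that the dual orthogonality forces $\sum_{\chi\in\hat{H}} e_\chi$ to equal the identity of $\ell^1(H,\lambda)$. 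Commutativity then pins down the unique diagonal as $D=\sum_{\chi\in\hat{H}} e_\chi\otimes e_\chi$, which satisfies $m(D)=\sum e_\chi=1$ and $a\cdot D=D\cdot a$.

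The second step is to compute $\|D\|$ via the isometric identification $\ell^1(H,\lambda)\hat{\otimes}\ell^1(H,\lambda)\cong\ell^1(H\times H,\lambda\times\lambda)$. Unwinding $D$ as a function on $H\times H$ yields
\[
D(x,y)=\sum_{\chi\in\hat{H}} e_\chi(x)e_\chi(y)=\frac{1}{\lambda(H)^2}\sum_{\chi\in\hat{H}} k_\chi^2\,\overline{\chi(x)\chi(y)},
\]
so $\|D\|=\frac{1}{\lambda(H)^2}\sum_{x,y\in H}\lambda(x)\lambda(y)\bigl|\sum_{\chi\in\hat{H}} k_\chi^2\chi(x)\chi(y)\bigr|$. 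To match the formula in the statement I would invoke the hypergroup involution $x\mapsto x^*$, which preserves $\lambda$ and sends each character to its conjugate; substituting $y\mapsto y^*$ in the outer sum converts $\chi(y)$ into $\overline{\chi(y)}$ without changing the measure or the absolute value, producing the claimed expression.

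The main obstacle will be the bookkeeping around the hypergroup character normalizations: the precise power of $k_\chi$ appearing in $e_\chi$ (and hence inside the absolute value in the final formula) is dictated by the constant $\lambda(H)/k_\chi$ in the character orthogonality relation, and one must use both orthogonality and its dual to identify the minimal idempotents and to verify that they sum to the unit. Once these normalizations are settled, the remainder of the argument is a direct and essentially mechanical calculation of a projective tensor norm, followed by the involution substitution that cleans up the form of the expression.
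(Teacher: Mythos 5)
This theorem is quoted in the paper from \cite[Theorem 3.7]{Alag4} without proof, so there is no internal argument to compare against; judged on its own merits, your proposal is correct and is essentially the standard argument used in the cited source. The key points --- semisimplicity of $\ell^1(H,\lambda)$, the minimal idempotents $e_\chi = \frac{k_\chi}{\lambda(H)}\overline{\chi}$ determined by the character orthogonality relations, uniqueness of the diagonal $D=\sum_\chi e_\chi\otimes e_\chi$ (which realizes the amenability constant by weak* lower semicontinuity of the norm together with the constant net giving the upper bound), the isometry $\ell^1(H,\lambda)\hat{\otimes}\ell^1(H,\lambda)\cong\ell^1(H\times H,\lambda\times\lambda)$, and the involution substitution $y\mapsto y^*$ --- are all in place and correct.
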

Given that $ZA(G)$ is an algebra that is deeply tied to $ZL^1(G)$ and $A(G)$, it is natural to hope that their nice properties would also hold for $ZA(G)$. This unfortunately is often not the case. For example, while restriction to subgroups is always a surjection for $A(G)$, it may fail to be so for $ZA(G)$. Take the symmetric group $S_3$ and the cyclic group $C_3$, where $C_3$ is viewed as a subgroup of $S_3$ as in the following figures.
\begin{figure}[ht]
\begin{center}
\begin{tabular}{|l|c|c|c|}
  \hline
    &  $()$ & $(1 2), (2 3), (1 3)$ & $(1 2 3), (1 3 2)$ \\ \hline
  $\chi_0$	& $1$ & $1$ & $1$ \\
  $\chi_1$	& $1$ & $-1$	 &  $1$  \\
  $\chi_2$	& $2$ & $0$ &  $-1$ \\
  \hline
\end{tabular}
\end{center}
\caption{Character table of $S_3$}
\label{fig:chartable_S3}
\end{figure}
\begin{figure}[ht]
\begin{center}
\begin{tabular}{|l|c|c|c|}
  \hline
    &  $()$ & $(1 2 3)$ & $(1 3 2)$\\ \hline
  $\rho_0$	& $1$ & \hfil$1$ & \hfil$1$ \\
  $\rho_1$	& $1$ & $e^{\frac{2\pi i}{3}}$	 &  $e^{\frac{4\pi i}{3}}$  \\
  $\rho_2$	& $1$ & \hfil$e^{\frac{4\pi i}{3}}$ &  $e^{\frac{2\pi i}{3}}$ \\
  \hline
\end{tabular}
\end{center}
\caption{Character table of $C_3$}
\label{fig:chartable_C3}
\end{figure}
\\
Note that $AMZA(S_3) = \text{span}\{\chi_0, \chi_1, \chi_2\}$ and $AMZA(C_3) = \text{span} \{\rho_0, \rho_1, \rho_3\}$. The restriction of $\chi_0$ and $\chi_1$ to $C_3$ both yield $\rho_0$, so it follows that the restriction map from $AMZA(S_3)$ to $AMZA(C_3)$ is not surjective. Recall that restriction being surjective was a key to the proof that $AM(A(N)) \leq AM(A(G))$ if $N$ is a subgroup of $G$, and as illustrated in the next example this failing is enough for $AMZA$ to not enjoy the same hereditary propery.
\begin{example}
For the rest of this paper we will refer to several groups using the Small Groups library implemented in GAP \cite{GAP4}, where the notation $\textrm{SmallGroup}(x,y)$ refers to the $y$th group of order $x$. Now let $G = \textrm{SmallGroup}(32,43)$, which can also be viewed as a semidirect product of $C_8$ and $C_2 \times C_2$, and let $N= D_{8}$ be the dihedral group of order $16$ identified as a normal subgroup of $G$. Calculations with GAP yield that $AMZA(G) = 2.59375$ and $AMZA(N) = 2.6875$, so $AMZA(G) < AMZA(N)$.
\end{example}
We will consider the question of whether $AMZA$ respects quotients by normal subgroups in section \ref{Prop Q}. To close this section, we provide a table reviewing known amenability bounds for some classes of related Banach algebras. In this context $A_{cb}(G)$ refers to the closure of $A(G)$ in the completely bounded multipler norm.
\renewcommand*{\arraystretch}{2}
\begin{table}[ht]
\centering
\begin{tabular}{|c|c|c|c|c|}
\hline
Banach algebra & $ZL^1(G)$ & $A(G)$ & $A_{cb}(G)$  & $ZA(G)$ \\
\hline
     Restriction on $G$ & Finite &  Locally Compact  & Locally compact & Finite \\
     \hline
    Best amenability bound & $\dfrac{7}{4}$ & $\dfrac{3}{2}$  & $\dfrac{9}{7}$ & $\dfrac{2}{\sqrt{3}}$  \\
    \hline
     Bound is sharp &    Yes&   Yes  & Unknown & Unknown\\
     \hline
     Reference & \cite[Theorem 1.2]{Choi1} & \cite[Theorem 1.6]{Choi2} & \cite[Theorem 4.6]{Juselius} & \cite[Corollary 4.3]{Alag2}\\
     \hline
\end{tabular}
\end{table}
\section{Frobenius Groups with Abelian Factor and Kernel}
It is of interest to try to determine when $AMZA(G) = AMZL(G)$. While this is not necessarily the case (for example, $AMZA(SL(2,\mathbb{F}_3)) = 4.875$ and $AMZL(SL(2,\mathbb{F}_3)) = 5$), it often does hold (See Question \ref{question: AMZA =/ AMZ} for a further discussion). In this section we will demonstrate that the amenability constants of $ZA(G)$ and $ZL^1(G)$ always agree for a particular class of semidirect products.
\begin{definition}
We say that a finite group $G$ is a Frobenius group if it has a finite, proper, non-trivial subgroup $H$ that satisfies $H \cap gHg^{-1} = \{e\}$ for all $g \in G \setminus H$. It can be shown that $K = \left(G \setminus \bigcup_{g \in G} gHg^{-1} \right) \cup \{e\}$ is a subgroup of $G$ and that $G = K \rtimes H$. We call $H$ the Frobenius complement of $G$ and $K$ the Frobenius kernel.
\end{definition}
We will consider the case when $K$ and $H$ are both abelian, which is a class of groups that includes the dihedral groups $D_{2n}$, where $n$ is odd, and the affine group of the finite field of order $q$, Aff$(\mathbb{F}_q)$, where $q$ is an odd prime power.
\begin{theorem}\label{Frobenius}
Let $G = K \rtimes H$ be Frobenius, where $K$ and $H$ are abelian and have orders $k$ and $h$ respectively. Then $AMZA(G) = AMZL(G) = 1 +  \frac{2(h^2 - 1)}{h} \left(1 - \frac{h-1}{k}\right)\left(1 - \frac{1}{k}\right)$.
\end{theorem}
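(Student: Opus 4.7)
The plan is to compute both amenability constants directly from their explicit formulas, exploiting the fact that the representation theory of a Frobenius group with abelian complement and abelian kernel is completely transparent. First I would record the character-theoretic data of $G$: the conjugacy classes are $\{e\}$ together with $(k-1)/h$ classes of size $h$ inside $K \setminus \{e\}$ (the free $H$-orbits on $K$) and $h-1$ classes of size $k$ (one for each non-identity element of $H$, whose centralizer in $G$ is $H$ itself). The irreducibles split into two families: the $h$ linear characters lifted from $G/K \cong H$, each taking value $1$ on $K$ and having modulus $1$ on $H$; and the $(k-1)/h$ characters of degree $h$ induced from non-trivial characters $\eta$ of $K$, each of which vanishes on $G \setminus K$ and restricts to $K$ as the orbit sum $\sum_{h_0 \in H} \eta^{h_0}$.

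For $AMZA(G)$ I would then partition the double sum over $(\chi,\chi')$ into five blocks (lifted/lifted diagonal, lifted/lifted off-diagonal, induced/induced diagonal, induced/induced off-diagonal, and mixed), and evaluate each inner sum $\sum_C |C|^2 \chi(C)\overline{\chi'(C)}$ using orthogonality of $\hat H$ together with the identity $\sum_{k_0 \in K} \psi(k_0) = 0$ (which follows from $\psi|_K$ being a sum of non-trivial characters on the abelian group $K$). I expect pleasantly uniform answers in each block; for instance the mixed lifted/induced block should produce $-h(h-1)$ and the off-diagonal both-induced block should produce $-h^2(h-1)$, independent of the particular characters involved. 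After multiplying by $d_\chi d_{\chi'}$ and counting the pairs in each block, the total should collapse to $k^2 h^2 + 2h(h^2-1)(k-1)(k-h+1)$; dividing by $|G|^2 = k^2h^2$ and writing $(k-1)(k-h+1)/k^2 = (1 - 1/k)(1 - (h-1)/k)$ yields the claimed formula.

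The main obstacle will be the algebraic consolidation at the end, since the five blocks do not obviously combine. A useful trick is the identity $(k-1)(k-h-1) = (k-1)(k-h+1) - 2(k-1)$, which pairs the off-diagonal both-induced block with the mixed block into a clean multiple of $(k-1)(k-h+1)$, after which only the diagonal pieces remain to extract the $k^2 h^2$ term. The parallel calculation for $AMZL(G)$ proceeds by splitting the inner sums $\sum_\chi d_\chi^2 \chi(C)\overline{\chi(C')}$ by the types of $C$ and $C'$ and applying column orthogonality of the character table; the $\{e\}$/$H$-class and mixed $K$/$H$-class blocks vanish, and the remaining blocks consolidate to the same quantity. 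Alternatively, once one of the two amenability constants is computed, the equality $AMZA(G) = AMZL(G)$ is forced by the hypergroup duality between $ZA(G)$ and $ZL^1(G)$ recorded in the previous section.
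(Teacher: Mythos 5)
Your primary route is correct and is essentially the paper's own argument: use the explicit character theory of a Frobenius group with abelian kernel and complement, evaluate each inner sum $\sum_C |C|^2\chi(C)\overline{\chi'(C)}$ block by block via orthogonality and the vanishing of induced characters off $K$, and consolidate. Your predicted block values ($-h(h-1)$ for the mixed block, $-h^2(h-1)$ for off-diagonal induced pairs) and your grand total $k^2h^2 + 2h(h^2-1)(k-1)(k-h+1)$ all check out; the only cosmetic difference is that the paper imports $\mathrm{ass}(G)$ and the value of $AMZL(G)$ from Alaghmandan--Spronk rather than recomputing them. One genuine error to strike, though: your proposed shortcut that ``the equality $AMZA(G)=AMZL(G)$ is forced by the hypergroup duality'' is false. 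The duality gives each algebra an amenability-constant formula, but the two formulas weight different data (class sizes versus squared degrees) and need not agree --- the paper's own example $AMZA(SL(2,\mathbb{F}_3))=4.875\neq 5=AMZL(SL(2,\mathbb{F}_3))$ rules this out. So the parallel direct computation of $AMZL(G)$ (or the citation to the known two-character-degree formula) is not optional; it is the only way to get the second equality.
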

\begin{proof}
By \cite[Proposition 3.3]{Alag1} we know that $\textrm{Irr}(G)$ is comprised of $h$ linear characters (the set of which we will designate $\mathfrak{L}$) that come from composition of characters from $\textrm{Irr}(H)$ and the quotient map $G \rightarrow G/K \cong H$ and $\frac{k-1}{h}$ many characters of degree $h$ induced from characters in $\textrm{Irr}(K)$. Furthermore, $G$ has trivial centre, $\frac{k-1}{h}$ conjugacy classes of size $h$ (which are all contained in $K$) and $h-1$ conjugacy classes of size $k$. Let $B_1 = \{Z(G)\}, B_2 = \{C \in \text{Cong}(G): |C| = h\},  B_3 = \{C \in \text{Cong}(G): |C| = k\}$. We know by \cite{Alag1} that $\textrm{ass}(G) = h^2 - \frac{(h^2-1)(1 + h(k-1) + (h-1)k^2}{hk^2}$, so it suffices to calculate $AMZA_{\textrm{off}}(G)$. Let $\chi \neq \chi' \in \textrm{Irr}(G)$. Then by Schur orthogonality we have that
\begin{align*}
\left|\sum_{C \in \textrm{Conj}(G)} |C|^2 \chi(C)\overline{\chi'(C)}\right| &= \left|d_\chi d_{\chi'} + h \sum_{C \in B_2} h \chi(C)\overline{\chi'(C)} + k^2 \sum_{C \in B_3} \chi(C)\overline{\chi'(C)}\right|\\
&= \left|(1-h)d_\chi d_{\chi'} + (k^2 - hk) \sum_{C \in B_3} \chi(C)\overline{\chi'(C)} + h \sum_{C \in \textrm{Conj}(G)} |C| \chi(C)\overline{\chi'(C)}\right|\\
&= \left|(1-h)d_\chi d_{\chi'} + (k^2 - hk) \sum_{C \in B_3} \chi(C)\overline{\chi'(C)}\right|.\\
\end{align*}
If $\chi \notin \mathfrak{L}$ then $\chi$ is induced from an irreducible character on $K$. By definition of induction of characters (see \cite[Chapter 5]{Isaacs}) $\chi$ vanishes on $G \setminus \bigcup_{x \in G}xKx^{-1}$, so $\chi$ vanishes on $H\setminus\{e\}$. This allows us to see that 
\begin{align*}
\sum_{\chi \text { or } \chi' \notin \mathfrak{L}, \chi \neq \chi'} d_\chi d_{\chi'}\left|\sum_{C \in \textrm{Conj}(G)} |C|^2 \chi(C)\overline{\chi'(C)}\right| &= (h-1)\sum_{\chi \text{ or } \chi' \notin \mathfrak{L}, \chi \neq \chi'} d_\chi^2 d_{\chi'}^2\\
 &= (h-1)\sum_{\chi, \chi' \notin \mathfrak{L}, \chi \neq \chi'} d_\chi^2 d_{\chi'}^2 + 2(h-1)\sum_{\chi \notin \mathfrak{L}, \chi' \in \mathfrak{L}} d_\chi^2 d_{\chi'}^2\\
  &= (h-1)\sum_{\chi, \chi' \notin \mathfrak{L}, \chi \neq \chi'} d_\chi^2 d_{\chi'}^2 + 2(h-1)\sum_{\chi \notin \mathfrak{L}, \chi' \in \mathfrak{L}} d_\chi^2 d_{\chi'}^2\\
  &= (h-1) \left(\frac{k-1}{h}\right)\left(\frac{k-1}{h} - 1\right)h^4 + 2(h-1)h\left(\frac{k-1}{h}\right)h^2.
\end{align*}
On the other hand, if $\chi, \chi' \in \mathfrak{L}$ with $\chi \neq \chi'$ then $\chi_H, \chi'_H \in \textrm{Irr}(H)$ so it follows by Schur Orthogonality that
\[
\sum_{C \in B_1 \cup B_3} |C| \chi(C)\overline{\chi'(C)}= \sum_{C \in \textrm{Conj}(H)} |C| \chi_H(C)\overline{\chi'_H(C)} = 0.
\]
Thus it follows that 
\begin{align*}
    \sum_{\chi, \chi' \mathfrak{L}, \chi \neq \chi'} d_\chi d_{\chi'}\left|\sum_{C \in \textrm{Conj}(G)} |C|^2 \chi(C)\overline{\chi'(C)}\right| &= \sum_{\chi, \chi' \mathfrak{L}, \chi \neq \chi'} d_\chi d_{\chi'} \left|(1 -h - k^2 + hk) + (k^2 - hk) \sum_{C \in B_1 \cup B_3} \chi(C)\overline{\chi'(C)}\right|\\
    &= h(h-1)(h+k^2 - hk -1).
\end{align*}
Combining everything together, we have that
\begin{align*}
    &AMZA_{\textrm{off}}(G) \\
    &= \frac{1}{h^2k^2}\left[(h-1) \left(\frac{k-1}{h}\right)\left(\frac{k-1}{h} - 1)h^4 + 2(h-1\right)h\left(\frac{k-1}{h}\right)h^2 + h(h-1)(h+k^2 - hk -1)\right].
\end{align*}
We can now see that 
\[
    AMZA_{\textrm{off}}(G) + \textrm{ass}(G) = \frac{2h^2}{k^2} - \frac{2h^2}{k} - \frac{2h}{k^2} + \frac{2}{hk^2} + 2h - \frac{2}{h} - \frac{2}{k^2} + \frac{2}{k} + 1 = 1 + 2 \frac{h^2 - 1}{h} \left(1 - \frac{h-1}{k}\right)\left(1 - \frac{1}{k}\right).
\]
By appealing again to \cite{Alag1}, we get that $AMZA(G) = AMZL(G)$.
\end{proof}
\section{Structure of Sum}
We wish to learn more about the behavior of 
\[
AMZA(G) = \frac{1}{|G|^2}\sum_{\chi, \chi' \in \textrm{Irr}(G)}d_\chi d_{\chi'} \left |\sum_{C \in \textrm{Conj}(G)} |C|^2 \chi(C)\overline{\chi'(C)}\right|.
\]
Of particular interest is the inside sum. Because irreducible characters have values in the algebraic integers, we know that $\left |\sum\limits_{C \in \textrm{Conj}(G)} |C|^2 \chi(C)\overline{\chi'(C)}\right| \in \mathbb{Z}$, although it turns out that this is still true even without taking the complex magnitude. 
First we need a lemma that we will use several times.
\begin{lemma}\label{Z(G)}
Let $\phi_1, \phi_2,..., \phi_{|Z(G)|}$ be the distinct irreducible characters of $Z(G)$ for a finite group $G$. Let $\chi \in \textrm{Irr}(G)$ and let $\chi_{Z(G)}$ denote the restriction of $\chi$ to $Z(G)$. Then there exists $\phi_i$ such that $\chi_{Z(G)} = d_\chi \phi_i$. This allows us to define the pairwise-disjoint sets $A_i = \{\chi \in \textrm{Irr}(G): \chi_{Z(G)} = d_\chi \phi_i\}$. Furthermore, it is known that $\sum_{\chi \in A_i} d_\chi^2 = |G: Z(G)|$.
\end{lemma}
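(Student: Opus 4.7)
The plan is to establish the three claims of the lemma in order, using standard representation-theoretic tools. First I would realize $\chi$ by an irreducible representation $\rho \colon G \to GL(V)$ and apply Schur's lemma on $Z(G)$: since each $\rho(z)$ commutes with every $\rho(g)$, it must be scalar, so $\rho(z) = \lambda(z) I_V$ for some function $\lambda \colon Z(G) \to \mathbb{C}^\times$. The assignment $z \mapsto \lambda(z)$ is then a one-dimensional (hence irreducible) character of the abelian group $Z(G)$, so $\lambda = \phi_i$ for a unique $i$, and taking traces gives $\chi_{Z(G)}(z) = d_\chi \phi_i(z)$. Pairwise disjointness of the sets $A_i$ follows at once, since $\phi_i$ is recovered from $\chi$ as $\chi_{Z(G)}/d_\chi$.

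The substantive claim is the degree identity $\sum_{\chi \in A_i} d_\chi^2 = |G:Z(G)|$. My plan is to introduce the induced character $\phi_i^G$ and decompose it via Frobenius reciprocity. Because $Z(G)$ is normal in $G$ and every element of $G$ fixes every $z \in Z(G)$ under conjugation, the standard induction formula collapses to
\[
\phi_i^G(z) = |G:Z(G)|\,\phi_i(z) \quad \text{for } z \in Z(G), \qquad \phi_i^G(g) = 0 \quad \text{for } g \notin Z(G).
\]
For any $\chi \in A_j$, Frobenius reciprocity then gives
\[
\langle \phi_i^G, \chi\rangle_G = \langle \phi_i, \chi_{Z(G)}\rangle_{Z(G)} = d_\chi \langle \phi_i, \phi_j\rangle_{Z(G)} = d_\chi \delta_{ij},
\]
so $\phi_i^G = \sum_{\chi \in A_i} d_\chi \chi$. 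Evaluating both sides at the identity yields the claimed formula.

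I do not anticipate any significant obstacle, as the proof assembles standard facts in a short chain. The one point that takes a moment to verify is the induction formula on $Z(G)$, which uses both the normality of $Z(G)$ and the centrality of its elements (in particular, to see that $\phi_i^G$ vanishes outside $Z(G)$, one checks that if $x^{-1}gx \in Z(G)$ for some $x \in G$ then $g$ already lies in $Z(G)$). Once that is in hand, the rest is straightforward bookkeeping.
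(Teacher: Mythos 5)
Your proof is correct, but it takes a different route from the paper, which disposes of the entire lemma in one line by citing Clifford's theorem \cite[Theorem 6.2]{Isaacs}. Your argument is self-contained and more elementary: Schur's lemma gives the central character $\phi_i$ directly (this is really the classical "central character" construction rather than Clifford theory proper, which is overkill here since $Z(G)$ acts by scalars on any irreducible module), and the degree identity $\sum_{\chi \in A_i} d_\chi^2 = |G:Z(G)|$ falls out of computing $\phi_i^G$ explicitly and decomposing it by Frobenius reciprocity. The one subtle point — that $\phi_i^G$ vanishes off $Z(G)$ because $x^{-1}gx \in Z(G)$ forces $g \in Z(G)$ — you identify and justify correctly. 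What your approach buys is a complete proof readable without chasing a reference; what the paper's citation buys is brevity, and Clifford's theorem also packages the orbit/inertia bookkeeping that would be needed if one replaced $Z(G)$ by a general normal subgroup, a generality this lemma does not require.
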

\begin{proof}
This follows by Clifford's Theorem \cite[Theorem 6.2]{Isaacs}.
\end{proof}
\begin{proposition}\label{proposition: Integer}
The value $\sum\limits_{C \in \textrm{Conj}(G)} |C|^2 \chi(C)\overline{\chi'(C)}$ is an integer divisible by $|Z(G)|$.
\end{proposition}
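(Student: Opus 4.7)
My plan is to convert the sum over conjugacy classes into a sum over group elements and then decompose according to cosets of $Z(G)$. Starting with
\[
\sum_{C \in \textrm{Conj}(G)} |C|^2 \chi(C)\overline{\chi'(C)} = \sum_{g \in G} [G:C_G(g)]\,\chi(g)\overline{\chi'(g)},
\]
I would use Lemma \ref{Z(G)} to select indices $i, j$ with $\chi \in A_i$ and $\chi' \in A_j$. For each $z \in Z(G)$, $\pi_\chi(z)$ is a scalar matrix, so $\chi(gz) = \phi_i(z)\chi(g)$ and $\chi'(gz) = \phi_j(z)\chi'(g)$, while $[G:C_G(gz)] = [G:C_G(g)]$ since $z$ is central. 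Splitting $G$ into $Z(G)$-cosets using a transversal $T$, the sum factors as
\[
\left(\sum_{t \in T}[G:C_G(t)]\,\chi(t)\overline{\chi'(t)}\right)\left(\sum_{z \in Z(G)} \phi_i(z)\overline{\phi_j(z)}\right).
\]
Schur orthogonality for the abelian group $Z(G)$ makes the second factor equal to $|Z(G)|$ if $i = j$ and $0$ otherwise. Hence if $i \neq j$ the total is zero, and divisibility is trivial.

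The main obstacle lies in the case $i = j$: I need to prove that
\[
S := \sum_{t \in T} [G:C_G(t)]\,\chi(t)\overline{\chi'(t)}
\]
is a rational integer, so that the original sum equals $|Z(G)|\,S \in |Z(G)|\mathbb{Z}$. When $i = j$ the summand is unchanged upon replacing $t$ by $tz$ for $z \in Z(G)$, so $S$ is independent of the choice of $T$ and is a well-defined sum over $G/Z(G)$. Clearly $S$ is an algebraic integer, being a sum of products of rational integers and character values.

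To obtain rationality I would employ the Galois action: for $\sigma \in \textrm{Gal}(\mathbb{Q}(\zeta_{|G|})/\mathbb{Q})$ determined by $\sigma(\zeta) = \zeta^k$ with $\gcd(k,|G|) = 1$, the classical identity $\sigma(\chi(t)) = \chi(t^k)$ applies to both $\chi$ and $\chi'$ (noting that complex conjugation is itself a Galois automorphism and Galois acts abelianly on cyclotomic fields). Since $\gcd(k,|G|) = 1$, we have $C_G(t) = C_G(t^k)$ (both containment directions come from writing $t$ as a power of $t^k$), and the map $t \mapsto t^k$ sends $Z(G)$ to itself and descends to a bijection on $G/Z(G)$. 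These observations together imply that $\sigma$ merely permutes the summands of $S$ and hence fixes $S$, so $S \in \mathbb{Q}$. An algebraic integer in $\mathbb{Q}$ is a rational integer, completing the argument.
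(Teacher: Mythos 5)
Your proof is correct and follows essentially the same route as the paper: decompose the element-wise sum over cosets of $Z(G)$, use Schur orthogonality on the centre to extract the factor $\delta_{ij}\,|Z(G)|$, and then argue that the remaining sum over $G/Z(G)$ is a rational algebraic integer. The only difference is that where the paper cites Sambale for that last rationality step, you write out the Galois-action argument in full, which is a welcome addition but not a different method.
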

\begin{proof}
For convenience let $Z = Z(G)$. Let $\textrm{Irr}(Z) = \{\phi_1,..,\phi_{|Z|}\}$ and let $A_i$ be the decomposition from Lemma \ref{Z(G)}. Let $\chi, \chi' \in \textrm{Irr}(G)$. By the lemma there exists $A_i$ and $A_j$ such that $\chi \in A_i$ and $\chi' \in A_j$. Let $|g^G|$ denote the size of the conjugacy class of $g$ in $G$. We adapt the argument from \cite{Sambale} to see the following:
\begin{align*}
    \sum_{C \in \textrm{Conj}(G)}|C|^2 \chi(C) \overline{\chi'(C)} &= \sum_{gZ \in G/Z} \frac{1}{d_\chi d_{\chi'}} |g^G|\chi(g)\overline{\chi'(g)} \sum_{z \in Z} \chi(z) \overline{\chi'(z)}\\
    &= \langle \chi_Z, \chi'_Z \rangle\cdot |Z| \sum_{gZ \in G/Z} \frac{1}{d_\chi d_{\chi'}} |g^G| \chi(g) \overline{\chi'(g)}\\
    &= \delta_{ij} \cdot |Z| \sum_{gZ \in G/Z}  |g^G| \chi(g) \overline{\chi'(g)}.\\
\end{align*}
Similarly as noted in the proof of \cite[Proposition 1]{Sambale}, $\sum\limits_{gZ \in G/Z}  |g^G| \chi(g) \overline{\chi'(g)}$ must be a rational algebraic integer, hence an integer. 
\end{proof}
This yields the interesting fact that $AMZA(G)\cdot|G|^2$ is divisible by $|Z(G)|$. We also get the following rough estimate of $AMZA(G/Z(G))$ compared to $AMZA(G)$.
\begin{corollary}
$AMZA(G) \geq \dfrac{AMZA(G/Z(G))}{|Z(G)|}$.
\end{corollary}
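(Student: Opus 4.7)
The plan is to lower-bound $|G|^2 AMZA(G)$ by restricting the defining sum to pairs $(\chi,\chi')$ of irreducible characters inflated from $\textrm{Irr}(G/Z(G))$ via the quotient $\pi\colon G\to G/Z$, where I write $Z := Z(G)$. These are precisely the characters in the set $A_1$ from Lemma~\ref{Z(G)}, and writing $\chi=\psi\circ\pi$, $\chi'=\psi'\circ\pi$, we have $d_\chi=d_\psi$ and $\chi(g)\overline{\chi'(g)}$ depends only on the coset $gZ$.

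Applying Proposition~\ref{proposition: Integer} to such a pair and grouping by $G/Z$-conjugacy class $D$ yields
\[
\sum_{C\in\textrm{Conj}(G)} |C|^2\chi(C)\overline{\chi'(C)} \;=\; |Z|\sum_{D\in\textrm{Conj}(G/Z)} |K_D|\,|D|^2\,\psi(D)\overline{\psi'(D)},
\]
where $|K_D|\geq 1$ is the integer satisfying $|g_D^G|=|K_D|\cdot|D|$; this comes from partitioning the $|Z|\cdot|D|$ elements of the preimage of $D$ in $G$ into $G$-conjugacy classes of common size $|g_D^G|$. Since $|G|^2=|Z|^2|G/Z|^2$, the bound $AMZA(G)\geq AMZA(G/Z)/|Z|$ then reduces to the weighted-$\ell^1$ inequality
\[
\sum_{\psi,\psi'} d_\psi d_{\psi'}\Bigl|\sum_D |K_D||D|^2\psi(D)\overline{\psi'(D)}\Bigr|
\;\geq\;
\sum_{\psi,\psi'} d_\psi d_{\psi'}\Bigl|\sum_D |D|^2\psi(D)\overline{\psi'(D)}\Bigr|.
\]

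This is where the main obstacle lies. Both sides are weighted $\ell^1$-norms of the entries of Hermitian positive semidefinite Gram matrices $M_T,M_S$ built from the character table of $G/Z$ with diagonal weight vectors $(|K_D||D|^2)_D$ and $(|D|^2)_D$, and the difference $M_T-M_S$ is itself PSD. A short calculation using column orthogonality together with the fact $|K_{\{e\}}|=1$ shows that the dimension vector $\mathbf d=(d_\psi)_\psi$ lies in the kernel of $M_T-M_S$, because both $M_T$ and $M_S$ send $\mathbf d$ to $|G/Z|\,\mathbf d$. Rescaling by $\operatorname{diag}(d_\psi)$, the inequality becomes the statement that a PSD perturbation with vanishing row sums cannot decrease the entrywise $\ell^1$-norm of a PSD matrix. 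I would attack this by decomposing the rescaled PSD difference into rank-one pieces $\mathbf v\mathbf v^*$ with $\mathbf v\perp\mathbf 1$ and adding them one at a time, handling each rank-one addition via a triangle inequality applied to the sign pattern of the running matrix; the rank-one structure of the perturbation keeps the phases controllable and permits the step-by-step comparison to go through.
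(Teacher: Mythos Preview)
Your reduction is exactly the one the paper uses: drop all blocks $A_i$ with $i\neq 1$, identify $A_1$ with $\textrm{Irr}(G/Z)$, and rewrite the inner sum over cosets $gZ$ using $|g^G|=|K_D|\,|D|$. At that point the paper simply invokes $|g^G|\ge |(gZ)^{G/Z}|$ and declares the inequality; you are right that this is precisely where the work lies, because the coefficients $|K_D|\,|D|^2$ sit inside an absolute value and cannot be decreased termwise.

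The genuine gap is in your proposed resolution. The abstract linear-algebra statement you are aiming for --- ``if $A$ and $B$ are Hermitian PSD with $A-B$ PSD and $(A-B)\mathbf 1=0$, then the entrywise $\ell^1$-norm satisfies $\|A\|_{1}\ge\|B\|_{1}$'' --- is false, even for a single rank-one perturbation. Take
\[
B=\begin{pmatrix}8&2&2\\2&2&-1\\2&-1&2\end{pmatrix},\qquad \mathbf v=(2,-1,-1)^{T},\qquad A=B+\mathbf v\mathbf v^{*}=\begin{pmatrix}12&0&0\\0&3&0\\0&0&3\end{pmatrix}.
\]
Here $B$ has eigenvalues $0,3,9$ (so is PSD), $\mathbf v\perp\mathbf 1$, and yet $\|A\|_{1}=18<22=\|B\|_{1}$. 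So neither the general PSD monotonicity nor the rank-one step you describe can go through; the ``triangle inequality applied to the sign pattern of the running matrix'' cannot control a rank-one update of this shape. Whatever makes the inequality true in the character-table situation must use more than positivity and the vanishing-row-sum condition; your plan, as stated, does not close the argument.
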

\begin{proof}
For notational convenience let $Z = Z(G)$. By the proof of Proposition \ref{proposition: Integer} we have that
\[
AMZA(G) = \frac{|Z|}{|G|^2} \sum_{i=1}^{|Z|}\sum_{\chi, \chi' \in A_i}d_\chi d_{\chi'} \left | \sum_{gZ \in G/Z} |g^G| \chi(g) \overline{\chi'(g)}\right|.
\]
We can identify $A_1$ with $\textrm{Irr}(G/Z)$, and then using the fact that $|g^G| \geq |{gZ}^{G/Z}|$ it follows that $AMZA(G) \geq |Z| \cdot AMZA(G/Z)$, as desired.
\end{proof}
\section{Groups with two conjugacy class sizes}
\begin{definition}
Let $cd(G)$ denote the set of dimensions of irreducible characters of $G$, and let $cc(G)$ denote the set of lengths of conjugacy classes of $G$. We say that $G$ has two characters degrees (or two conjugacy clas sizes) if $|cd(G)| = 2$ (or $|cc(G)| = 2$).
\end{definition}
There is a nice formula for $AMZL(G)$ in the two character degree case:
\begin{theorem}\label{Theorem: two character degrees}\cite[Theorem 2.4]{Alag1}. 
Let $G$ be a finite group with $cd(G) = \{1,m\}$. Then
\[
AMZL(G) = 1 + 2(m^2 - 1)\left(1 - \frac{1}{|G| \cdot |G'|} \sum_{C \in \textrm{Conj}(G)} |C|^2 \right).
\]
\end{theorem}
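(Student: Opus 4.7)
The plan is to explicitly evaluate the inner sum $\sum_{\chi \in \textrm{Irr}(G)} d_\chi^2 \chi(C)\overline{\chi(C')}$ using the hypothesis $cd(G) = \{1,m\}$. Let $\mathfrak{L}$ denote the set of linear characters of $G$, so every other irreducible character has degree $m$ and $|\mathfrak{L}| = [G:G']$. Column orthogonality gives $\sum_\chi \chi(C)\overline{\chi(C')} = \delta_{C,C'} |G|/|C|$, and using this to rewrite the degree-$m$ part yields
\[
\sum_{\chi \in \textrm{Irr}(G)} d_\chi^2 \chi(C)\overline{\chi(C')} = (1-m^2)\sum_{\chi \in \mathfrak{L}} \chi(C)\overline{\chi(C')} + m^2 \frac{|G|}{|C|}\delta_{C,C'}.
\]
The linear characters are precisely the lifts of $\widehat{G/G'}$, and each conjugacy class lies in a single coset of $G'$ since $G'$ is normal. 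Schur orthogonality on $G/G'$ then yields $\sum_{\chi \in \mathfrak{L}} \chi(C)\overline{\chi(C')} = |G|/|G'|$ if $C, C'$ lie in the same coset of $G'$, and $0$ otherwise.

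Now I split the double sum defining $AMZL(G)$ into its diagonal ($C=C'$) and off-diagonal contributions. The diagonal inner quantity is $\sum_\chi d_\chi^2 |\chi(g)|^2 \geq 0$, so its absolute value equals itself; after multiplying by $|G|^2$, the diagonal contributes $m^2|G|^2 + (1-m^2)|G|S/|G'|$, where $S := \sum_C |C|^2$ and I have used $\sum_C |C| = |G|$ on the $\delta$-term. For $C \neq C'$ only coset-matched pairs contribute, and the value inside the bars is the negative number $(1-m^2)|G|/|G'|$, whose absolute value is $(m^2-1)|G|/|G'|$. This gives an off-diagonal contribution of $(m^2-1)(|G|/|G'|)(T - S)$, where $T := \sum_{aG' \in G/G'}\bigl(\sum_{C \subseteq aG'}|C|\bigr)^2$.

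The key combinatorial step is that, for each coset $aG'$, the conjugacy classes contained in $aG'$ partition it, hence $\sum_{C \subseteq aG'}|C| = |G'|$; summing the square over the $|G|/|G'|$ cosets gives $T = |G|\cdot|G'|$. Combining the diagonal and off-diagonal parts and dividing by $|G|^2$ simplifies to the stated identity $AMZL(G) = 1 + 2(m^2-1)\bigl(1 - S/(|G|\cdot|G'|)\bigr)$. The only real subtlety is the sign bookkeeping inside the absolute values (ensuring the diagonal term is nonnegative and the off-diagonal term is negative before taking absolute values); once that is sorted, the identity $T = |G|\cdot|G'|$ is the clean combinatorial reason the final expression takes such a simple shape.
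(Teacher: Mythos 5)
Your proof is correct. Note, however, that the paper does not prove this statement at all: it is imported by citation as \cite[Theorem 2.4]{Alag1}, so there is no in-paper argument to compare against. What you have written is a complete, self-contained derivation, and it is essentially the standard one for this result: reduce the inner sum via column orthogonality to the linear-character kernel $\sum_{\chi\in\mathfrak{L}}\chi(C)\overline{\chi(C')}$, recognize this as $\tfrac{|G|}{|G'|}$ times the indicator that $C$ and $C'$ lie in the same coset of $G'$, and then use the partition identity $\sum_{C\subseteq aG'}|C|=|G'|$ to evaluate the off-diagonal mass. Your sign bookkeeping is right: the diagonal inner sum is $\sum_\chi d_\chi^2|\chi(C)|^2\ge 0$ so the modulus is harmless there, and for $C\neq C'$ the surviving value $(1-m^2)|G|/|G'|$ is indeed negative since $m\ge 2$. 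I checked the final assembly: the diagonal gives $m^2|G|^2+(1-m^2)|G|S/|G'|$ and the off-diagonal gives $(m^2-1)(|G|/|G'|)(|G||G'|-S)$, which sum to $|G|^2\bigl(2m^2-1-2(m^2-1)S/(|G||G'|)\bigr)$, matching the stated formula. One small wording slip: a conjugacy class lies in a single coset of $G'$ not merely because $G'$ is normal (that would fail for, say, the trivial subgroup), but because $gxg^{-1}x^{-1}\in G'$, i.e.\ because $G/G'$ is abelian so the quotient map is constant on conjugacy classes. With that justification inserted, the argument is complete.
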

We can prove a dual formula for $AMZA(G)$ if instead we assume that there are only two possible sizes of conjugacy classes. First, we show a lemma.
\begin{lemma}
Let $G$ be a finite group. Then 
\[
\frac{1}{|G|^2}\sum_{\chi, \chi' \in \textrm{Irr}(G)}d_\chi d_{\chi'}\left|\sum_{x \in Z(G)}\chi(x)\overline{\chi'(x)}\right| = 1.
\]
\end{lemma}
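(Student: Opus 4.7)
The plan is to exploit Lemma \ref{Z(G)} to collapse each inner sum to a Schur orthogonality relation on the abelian group $Z(G)$, thereby removing the absolute value bars and reducing the whole expression to a standard identity about character degrees.

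First, for any $\chi \in \textrm{Irr}(G)$ lying in $A_i$ and any $\chi' \in \textrm{Irr}(G)$ lying in $A_j$ (in the decomposition from Lemma \ref{Z(G)}), I would write $\chi_{Z(G)} = d_\chi \phi_i$ and $\chi'_{Z(G)} = d_{\chi'}\phi_j$, so that
\[
\sum_{x \in Z(G)}\chi(x)\overline{\chi'(x)} \;=\; d_\chi d_{\chi'} \sum_{x \in Z(G)} \phi_i(x)\overline{\phi_j(x)} \;=\; d_\chi d_{\chi'} \, |Z(G)| \, \delta_{ij},
\]
using Schur orthogonality of the distinct irreducible (linear) characters of the abelian group $Z(G)$. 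The important point is that this quantity is already nonnegative, so the absolute value acts as the identity and does not obstruct any manipulation.

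Substituting back and using the partition $\textrm{Irr}(G) = \bigsqcup_{i=1}^{|Z(G)|} A_i$, the left-hand side becomes
\[
\frac{|Z(G)|}{|G|^2}\sum_{i=1}^{|Z(G)|}\sum_{\chi,\chi' \in A_i} d_\chi^2 d_{\chi'}^2 \;=\; \frac{|Z(G)|}{|G|^2}\sum_{i=1}^{|Z(G)|}\left(\sum_{\chi \in A_i}d_\chi^2\right)^2.
\]
I would then invoke the final assertion of Lemma \ref{Z(G)}, namely $\sum_{\chi \in A_i}d_\chi^2 = |G : Z(G)|$, which makes the inner square equal to $(|G|/|Z(G)|)^2$ independently of $i$. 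Summing the $|Z(G)|$ identical terms yields $|Z(G)| \cdot |G|^2/|Z(G)|^2$, and after multiplying by the prefactor $|Z(G)|/|G|^2$ everything cancels to $1$.

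There is no real obstacle here; the only subtlety is recognizing at the outset that Lemma \ref{Z(G)} forces each inner sum to be a nonnegative real number, so the absolute value disappears for free and the computation reduces to bookkeeping with the $A_i$ partition.
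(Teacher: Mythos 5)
Your proposal is correct and follows essentially the same route as the paper: apply Lemma \ref{Z(G)} to write each restriction as $d_\chi \phi_i$, use Schur orthogonality on $Z(G)$ to reduce the inner sum to $d_\chi d_{\chi'}|Z(G)|\delta_{ij}$, and then invoke $\sum_{\chi \in A_i} d_\chi^2 = |G:Z(G)|$ to finish. Your explicit remark that the inner sums are nonnegative, so the absolute value is harmless, is a nice touch that the paper leaves implicit.
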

\begin{proof}
Using the notation from Lemma \ref{Z(G)}, let $\chi \in A_i$ and $\chi' \in A_j$. Then by Schur orthogonality for $Z(G)$
\[
\left|\sum_{x \in Z(G)} \chi(x)\overline{\chi(x)}\right| = |Z(G)| \cdot \langle d_\chi \phi_i | d_{\chi'} \phi_j\rangle_{Z(G)} = |Z(G)| \cdot d_\chi d_{\chi'} \delta_{ij},
\]
so it follows that
\begin{align*}
    \frac{1}{|G|^2}\sum_{\chi, \chi' \in \textrm{Irr}(G)}d_\chi d_{\chi'}\left|\sum_{x \in Z(G)}\chi(x)\overline{\chi'(x)}\right| &= \frac{1}{|G|^2}\sum_{i=1}^{|Z(G)|} \sum_{\chi,\chi' \in A_i} |Z(G)| d_\chi^2 d_{\chi'}^2 \\
    &= \frac{1}{|G|^2}\sum_{i=1}^{|Z(G)|} |Z(G)| \cdot |G: Z(G)|^2 \\
    &= 1.
\end{align*}
\end{proof}
\begin{theorem}\label{Theorem: two conjugacy class sizes}
Let $G$ be a finite group with $cc(g) = \{1,s\}$. Then
\[
AMZA(G) = 1 + 2(s-1)\left(1 - \frac{1}{|G| \cdot |\mathfrak{L}(G)|}\sum_{\chi \in \textrm{Irr}(G)}d_\chi^4\right),
\]
where $\mathfrak{L}(G)$ is the set of linear irreducible characters of $G$.
\end{theorem}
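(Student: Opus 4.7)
The plan is to mirror the proof of Theorem~\ref{Theorem: two character degrees} by duality. The hypothesis $cc(G) = \{1,s\}$ forces every non-singleton conjugacy class to have size exactly $s$, so I would begin by splitting the inner sum of $AMZA(G)$ into its central ($|C|=1$, contributing $\sum_{x \in Z(G)}\chi(x)\overline{\chi'(x)}$) and non-central ($|C|=s$) pieces. First Schur orthogonality, $\sum_C |C|\chi(C)\overline{\chi'(C)} = |G|\delta_{\chi\chi'}$, lets me express the non-central part in terms of the central part; solving gives the key identity
\[
\sum_C |C|^2 \chi(C)\overline{\chi'(C)} = s|G|\delta_{\chi\chi'} + (1-s)\sum_{x \in Z(G)}\chi(x)\overline{\chi'(x)}.
\]

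The next step is to dispense with the absolute values using the preceding Lemma. Its proof establishes that $\sum_{x \in Z(G)}\chi(x)\overline{\chi'(x)} = |Z(G)|d_\chi d_{\chi'}$ when $\chi,\chi'$ lie in the same block $A_i$ of the central-character decomposition, and $0$ otherwise; in particular the centre sum is always nonnegative. When $\chi = \chi'$, since $|Z(G)|d_\chi^2 \leq |G|$, the expression $s|G|-(s-1)|Z(G)|d_\chi^2$ is nonnegative, so summing against $d_\chi^2$ yields a diagonal contribution of $s|G|^2 - (s-1)|Z(G)|\sum_\chi d_\chi^4$. When $\chi \neq \chi'$, only the $(1-s)$ centre sum survives, and the preceding Lemma
\[
\frac{1}{|G|^2}\sum_{\chi, \chi' \in \textrm{Irr}(G)}d_\chi d_{\chi'}\left|\sum_{x \in Z(G)}\chi(x)\overline{\chi'(x)}\right| = 1,
\]
minus its diagonal part $|Z(G)|\sum_\chi d_\chi^4$, yields an off-diagonal contribution of $(s-1)(|G|^2 - |Z(G)|\sum_\chi d_\chi^4)$.

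Adding the two contributions and dividing by $|G|^2$ produces an intermediate form $AMZA(G) = 1 + 2(s-1)(1 - |Z(G)|\sum_\chi d_\chi^4 / |G|^2)$. The main obstacle is matching the resulting coefficient $|Z(G)|/|G|^2$ with the claimed $1/(|G|\cdot|\mathfrak{L}(G)|)$, which amounts to the identity $|Z(G)|\cdot|\mathfrak{L}(G)| = |G|$, equivalently $|G'| = |Z(G)|$, for groups with two conjugacy class sizes. I would attempt this by appealing to the classical results of It\^o and Isaacs (that such a $G$ is nilpotent of class at most $2$, with strongly constrained commutator/centre interplay), combined with a counting argument linking $|G'|$ to $|Z(G)|$. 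I expect this structural identification, rather than the character-theoretic manipulations above, to be where the real content of the proof lies, and I would verify it carefully on small test cases (such as $Q_8$, $H_p$, and products with abelian groups) before committing to the general argument.
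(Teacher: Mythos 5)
Your character-theoretic computation is correct and is essentially the paper's own argument: you split the inner sum at the centre, use first orthogonality to absorb the non-central classes, check nonnegativity on the diagonal, and use the preceding lemma off the diagonal, arriving at the intermediate form $AMZA(G) = 1 + 2(s-1)\bigl(1 - |Z(G)|\sum_{\chi}d_\chi^4/|G|^2\bigr)$. Up to that point there is nothing to criticize.

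The gap is the final step, and it is not a gap you can close: the identity $|Z(G)|\cdot|\mathfrak{L}(G)| = |G|$, equivalently $|G'| = |Z(G)|$, which you hope to extract from It\^o--Isaacs structure theory, is false for groups with two conjugacy class sizes. Your own proposed test case of a product with an abelian group already refutes it: $G = C_2 \times Q_8$ has $cc(G) = \{1,2\}$ but $|Z(G)| = 4$, $|G'| = 2$, $|\mathfrak{L}(G)| = 8$. Here $\sum_\chi d_\chi^4 = 40$, so your intermediate formula gives $AMZA(G) = \frac{7}{4}$ --- which is correct, since tensoring with an abelian factor does not change $AMZA$ and $AMZA(Q_8) = \frac{7}{4}$ by Example \ref{remark: p extraspecial} --- whereas the displayed statement of the theorem gives $\frac{19}{8}$. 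So you should stop at the intermediate formula: that is the correct dual of Theorem \ref{Theorem: two character degrees}, with $|Z(G)|$ (the number of singleton classes) playing the role dual to $|\mathfrak{L}(G)|$ (the number of linear characters). Be aware that the paper's own proof commits exactly the error you flagged, asserting $|\mathfrak{L}(G)| = |G|/|Z(G)|$ as a ``fact'' in its last line, so the theorem as printed needs $|Z(G)|/|G|^2$ in place of $1/(|G|\cdot|\mathfrak{L}(G)|)$; the subsequent applications in the paper only ever use the $|Z(G)|/|G|^2$ form, so they survive the correction. Had you carried out the verification on $C_2\times Q_8$ before committing, as you said you would, you would have caught this.
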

\begin{proof}
If $s = 1$ then $G$ is abelian, so $AMZA(G) = AMZL(G) = 1$, hence we can assume that $s > 1$. Recall that we have the decomposition $AMZA(G) = AMZA_{\textrm{off}}(G) + \textrm{ass}(G)$. We will work with each component separately.
\begin{align*}
|G|^2 AMZA_{\textrm{off}}(G) &= \sum_{\chi \neq \chi' \in \textrm{Irr}(G)}d_\chi d_{\chi'} \left|\sum_{C \in \textrm{Conj}(G)} |C|^2 \chi(C)\overline{\chi'(C)}\right|\\
&= \sum_{\chi \neq \chi' \in \textrm{Irr}(G)}d_\chi d_{\chi'} \left|\sum_{x \in Z(G)} \chi(x) \overline{\chi'(x)} + s\sum_{C \in \textrm{Conj}(G), |C| > 1} s \chi(C)\overline{\chi'(C)}\right|\\
&= \sum_{\chi \neq \chi' \in \textrm{Irr}(G)}d_\chi d_{\chi'} \left|(1-s)\sum_{x \in Z(G)} \chi(x) \overline{\chi'(x)} + s\sum_{C \in \textrm{Conj}(G)} s \chi(C)\overline{\chi'(C)}\right|\\
&= \sum_{\chi \neq \chi' \in \textrm{Irr}(G)}d_\chi d_{\chi'} \left|(1-s)\sum_{x \in Z(G)} \chi(x) \overline{\chi'(x)}\right|\\
&= (s-1)\sum_{\chi, \chi' \in \textrm{Irr}(G)}d_\chi d_{\chi'} \left|\sum_{x \in Z(G)} \chi(x) \overline{\chi'(x)}\right| - (s-1)\sum_{\chi \in \textrm{Irr}(G)}d_\chi^2 \sum_{x \in Z(G)} |\chi(x)|^2\\
&= |G|^2(s-1) + (1-s) \cdot |Z(G)| \cdot \left(\sum_{\chi \in \textrm{Irr}(G)}d_\chi^4\right).\\
\end{align*}
And then for $\textrm{ass}(G)$ we have
\begin{align*}
|G|^2 \textrm{ass}(G) &= \sum_{\chi \in \textrm{Irr}(G)}d_\chi^2 \sum_{C \in \textrm{Conj}(G)} |C|^2 |\chi(C)|^2\\
&= \sum_{\chi \in \textrm{Irr}(G)}d_\chi^2\left(s|G| + (1-s)\sum_{x \in Z(G)} |\chi(x)|^2 \right)\\
&= s|G|^2  + (1-s) \cdot |Z(G)| \cdot \left(\sum_{\chi \in \textrm{Irr}(G)}d_\chi^4\right).\\
\end{align*}
Adding them together and rearranging terms, we achieve formula
\[
AMZA(G) = 1 + 2(s-1)\left(1 - \frac{|Z(G)|}{|G|^2}\sum_{\chi \in \textrm{Irr}(G)}d_\chi^4\right).
\]
Finally, we recall the fact that $|\mathfrak{L}(G)| = \frac{|G|}{|Z(G)|}$, to get the desired result.
\end{proof}
\begin{example}
\label{remark: p extraspecial}
Let $p$ be a prime. A group $G$ is called p-extraspecial if $|G| = p^{2n+1}$ for some integer $n$, $|Z(G)| = p$, and $G/Z(G)$ is a non-trivial elementary abelian p-group. As noted in \cite{Alag1}, such groups have two conjugacy class sizes and two character degrees, so both of the above formulas apply and yield the same result, namely that 
\[
AMZL(G) = AMZA(G) = 1 +2 \left(1 - \frac{1}{p^{2n}}\right) \left(1 - \frac{1}{p}\right).
\]
\end{example}
This leads to the question: will $AMZL(G) = AMZA(G)$ always hold in the case that $G$ has two conjugacy class sizes and two character degrees? Based on our earlier formulas, Y. Choi has observed that the answer is
positive.
\begin{theorem}\label{Theorem: cd and cc = 2}
Let $G$ be a finite group where $cd(G) = \{1, m\}$ and $cc(G) = \{1,s\}$. Then 
\[
AMZA(G) = AMZL(G)
\]
\end{theorem}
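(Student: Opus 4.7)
The plan is to substitute the two hypotheses into the formula from Theorem \ref{Theorem: two character degrees} and into the expression
\[
AMZA(G) = 1 + 2(s-1)\left(1 - \frac{|Z(G)|}{|G|^2}\sum_{\chi\in\textrm{Irr}(G)} d_\chi^4\right),
\]
which is obtained in the course of the proof of Theorem \ref{Theorem: two conjugacy class sizes}, and then to reduce the desired equality to a single identity coming from the fact that $|\textrm{Irr}(G)| = |\textrm{Conj}(G)|$.

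Set $\ell = |\mathfrak{L}(G)| = [G:G']$ and $z = |Z(G)|$. Because $cd(G) = \{1,m\}$, there are $\ell$ linear characters and $(|G|-\ell)/m^2$ characters of degree $m$, giving $\sum_{\chi} d_\chi^4 = \ell + (|G|-\ell)m^2 = |G|m^2 - \ell(m^2-1)$. Because $cc(G) = \{1,s\}$, there are $z$ central classes and $(|G|-z)/s$ classes of size $s$, giving $\sum_{C} |C|^2 = z + s(|G|-z) = s|G| - z(s-1)$.

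Counting irreducible characters and conjugacy classes gives $\ell + (|G|-\ell)/m^2 = z + (|G|-z)/s$, which clears to the key identity
\[
s\ell(m^2-1) - m^2 z(s-1) = |G|(m^2-s). \quad (\star)
\]

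To finish, I would substitute the expressions for $\sum d_\chi^4$ and $\sum|C|^2$ into the two formulas and use $|G'| = |G|/\ell$ to place both $AMZA(G) - 1$ and $AMZL(G) - 1$ over the common denominator $|G|^2$. A direct expansion then shows that the difference of the resulting numerators equals $2|G|$ times $s\ell(m^2-1) - m^2 z(s-1) - |G|(m^2-s)$, which vanishes by $(\star)$. The only real obstacle is the algebraic bookkeeping; once $(\star)$ is extracted from the character/class count, no further representation-theoretic input is needed beyond the two previously established formulas.
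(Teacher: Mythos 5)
Your proposal is correct and follows essentially the same route as the paper: both substitute the degree/class-size counts into the two closed formulas and reduce the equality to the single fact that $|\textrm{Irr}(G)| = |\textrm{Conj}(G)|$, which you package as the linear identity $(\star)$ and the paper packages as the symmetry $f(x,y)=f(y,x)$ of an explicit two-variable function of $k=|\textrm{Irr}(G)|$ and $|G|$. The final expansion you defer does check out (the difference of the two expressions over $|G|^2$ is $\pm 2|G|$ times the left side of $(\star)$ minus its right side), so there is no gap.
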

The following proof is based on a personal communication from Choi.
\begin{proof}[Theorem 4.6]
If either $s = 1$ or $m = 1$ then they both equal $1$, in which case $G$ is abelian and the result is trivial. Instead, we assume that $m,s > 1$. For notational convenience, let $k = |\textrm{Irr}(G)| = |\textrm{Conj}(G)|$, $Z = Z(G)$, and $\mathfrak{L} = \mathfrak{L}(G)$.
Because $|G| = \sum_{C \in \textrm{Conj}(G)} |C| = |Z| + (k-|Z|)s$ we can see that
\begin{align*}
    \sum_{C \in \textrm{Conj}(G)} |C|^2 &= |Z| + (k - |Z|)s^2\\
    &= |Z| + s(|Z| + (k-|Z|)s) - s|Z|\\
    &= |Z| + s|G| - s|Z|\\
    &= |Z| + (k - |Z|)s + s|G| - sk\\
    &= (s+1)|G| - sk
\end{align*}
Similarly, from $|G| = \sum_{\chi \in \textrm{Irr}(G)} {d_\chi}^2 = |\mathfrak{L}| + (k - |\mathfrak{L}|)m^2$ it follows that
\begin{align*}
    \sum_{\chi \in \textrm{Irr}(G)} {d_\chi}^4 &= |\mathfrak{L}| + (k - |\mathfrak{L}|)m^4\\
    &= |\mathfrak{L}| + m^2(|\mathfrak{L}| + (k - |\mathfrak{L}|)m^2) - m^2|\mathfrak{L}|\\
    &= |\mathfrak{L}| + m^2 |G| - m^2|\mathfrak{L}|\\
    &= |\mathfrak{L}| + (k-|\mathfrak{L}|)m^2 + m^2|G| - km^2\\
    &= (m^2+1)|G| - km^2
\end{align*}
Define the function 
\[
f(x,y) = x - 1 - \frac{xk - |G|}{|G|}\Big((y + 1)|G| - yk \Big).
\]
The above calculations combined with Theorem \ref{Theorem: two character degrees} and Theorem \ref{Theorem: two conjugacy class sizes} yields that $AMZL(G) -1 = 2 f(m^2, s)$ and $AMZA(G) - 1 = 2 f(s,m^2)$. However, we can see that
\begin{align*}
    f(x,y) &= x - 1 + y + 1 - \frac{xk(y+1)}{|G|} + \frac{(xk - |G|)(yk)}{|G|^2}\\
    &= x + y - \frac{xy + x + y}{|G|} + \frac{xyk^2}{|G|^2}.
\end{align*}
In particular note that $f(x,y) = f(y,x)$, thus $AMZA(G) = AMZL(G)$, as desired.
\end{proof}
\begin{remark}
By results in \cite{Fernandez}, for any integer $n$ there exists groups $G$ and $H$ such that $|cd(G)| = |cc(H)| = 2$ and $|cc(G)| = |cd(H)| = n$. This tells us that the two conjugacy class size and two character degree conditions are possibly independent of each other, so there is no reason to expect that the conclusion of Theorem \ref{Theorem: cd and cc = 2} will hold if only one of the sets $cd(G)$ and $cc(G)$ has size $2$. Indeed, $G = \textrm{SmallGroup}(256,10070)$ is an example of a group with $|cc(G)| = 2$, $|cd(G)| = 3$, and $AMZL(G) \neq AMZA(G)$.
\end{remark}

\section{AMZA of Quotient Groups}\label{Prop Q}
Recall that an essential ingredient in Choi's \cite{Choi1} proof that $\frac{7}{4}$ is the sharp amenability bound for $ZL^1(G)$ is the fact that $AMZL(G) \geq AMZL(G/N)$ for $N \trianglelefteq G$. If we look at the collection of groups such that $AMZA(G)$ respects all possible quotients, then by utilizing similar techniques as in \cite{Choi1} we can prove that $\frac{7}{4}$ is a sharp amenability bound.
\begin{theorem}\label{Theorem: Q}
$\frac{7}{4}$ is the sharp amenability bound over the collection of finite groups $G$ with the property that $AMZA(G) \geq AMZA(G/N)$ for all $N \trianglelefteq G$.
\end{theorem}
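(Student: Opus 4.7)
The plan follows Choi's strategy from \cite{Choi1}, adapted to $AMZA$ via the quotient hypothesis. Two things must be shown: $\tfrac{7}{4}$ is attained by some nonabelian $G$ in the class (so no $\lambda > \tfrac{7}{4}$ is an amenability bound), and $AMZA(G) \geq \tfrac{7}{4}$ for every nonabelian $G$ in the class. For the first, I take $G_{0} = D_{8}$: its only proper nontrivial normal subgroups are its center and its two Klein-four subgroups, and each yields an abelian quotient with $AMZA = 1$, so $D_{8}$ lies in the class; being extraspecial of order $2^{3}$, Example~\ref{remark: p extraspecial} with $(p,n) = (2,1)$ gives $AMZA(D_{8}) = 1 + 2(1-\tfrac{1}{4})(1-\tfrac{1}{2}) = \tfrac{7}{4}$.

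For the lower bound, given $G$ in the class and nonabelian, pick $N \trianglelefteq G$ maximal subject to $H := G/N$ being nonabelian. Then every proper quotient of $H$ is abelian, so $H$ is \emph{just-nonabelian}: $H'$ is the unique minimal normal subgroup of $H$. Since the class hypothesis on $G$ gives $AMZA(G) \geq AMZA(H)$, it suffices to prove $AMZA(H) \geq \tfrac{7}{4}$ for every just-nonabelian $H$. The structural classification of such $H$ splits into a nilpotent and a non-nilpotent case.

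If $H$ is a $p$-group, then necessarily $|H'| = p$ and $H$ has nilpotency class $2$. This forces $cc(H) = \{1,p\}$ and, by the Heisenberg-type character theory of central extensions of an abelian group by $C_{p}$, $cd(H) = \{1,\sqrt{|H/Z(H)|}\}$. Theorem~\ref{Theorem: cd and cc = 2} then yields $AMZA(H) = AMZL(H)$, and \cite[Theorem~1.2]{Choi1} delivers $AMZL(H) \geq \tfrac{7}{4}$. If instead $H$ is not a $p$-group, I first observe that $\gcd(|H'|,|H/H'|) = 1$ (otherwise every Sylow subgroup of $H$ would be normal, being the preimage of a Sylow subgroup of the abelian group $H/H'$, forcing $H$ to be nilpotent), and Schur--Zassenhaus produces $H = H' \rtimes A$ with $A$ abelian. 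When $A$ acts fixed-point-freely on $H' \setminus \{e\}$, $H$ is Frobenius with abelian kernel and complement, and Theorem~\ref{Frobenius}, together with a short arithmetic check on the admissible pairs $(|A|,|H'|)$ satisfying the Frobenius divisibility constraint, yields $AMZA(H) \geq \tfrac{7}{4}$.

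The main obstacle is the remaining subcase of a non-nilpotent just-nonabelian $H = H' \rtimes A$ that is not Frobenius, i.e.\ $C_{A}(H')$ is a proper nontrivial subgroup of $A$. There neither Theorem~\ref{Theorem: cd and cc = 2} nor Theorem~\ref{Frobenius} applies verbatim. The route I would take is to use Clifford theory relative to the faithful irreducible action of $A_{0} := A/C_{A}(H')$ on $H'$ to write down the character table and the conjugacy class sizes of $H$ explicitly, and then substitute into the $AMZA$ formula. The expectation is that this either yields the bound directly through an arithmetic estimate, or shows that the equality $AMZA(H) = AMZL(H)$ persists in this broader just-nonabelian setting, after which \cite[Theorem~1.2]{Choi1} closes the argument.
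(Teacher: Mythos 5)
Your reduction to just-nonabelian quotients and your attainment argument via $D_8$ (using Example \ref{remark: p extraspecial}) are fine, and your nilpotent and Frobenius subcases do go through. But the final subcase --- non-nilpotent just-nonabelian $H = H' \rtimes A$ with $C_A(H')$ proper and nontrivial --- is not proved: you describe a Clifford-theoretic computation you ``would'' carry out and state an ``expectation'' about its outcome. That is exactly the case where neither Theorem \ref{Theorem: cd and cc = 2} nor Theorem \ref{Frobenius} applies, so as written the argument has a genuine hole precisely at its hardest point.

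The idea you are missing is that one should not try to compute $AMZA$ exactly in the residual case, but instead bound it from below by the auxiliary minorant: by construction $AMZA(G) \geq \textrm{ass}(G)$, and $\textrm{ass}(G)$ is the \emph{same} quantity that minorizes $AMZL(G)$. The paper organizes the case analysis by Choi's trichotomy from \cite{Choi1} (nontrivial centre; trivial centre with a class of size $2$; trivial centre with no class of size $2$) rather than by nilpotency. The first case is exactly a two-conjugacy-class-size, two-character-degree group (Theorem \ref{Theorem: cd and cc = 2}), the second forces $G \cong D_{2p}$ (Theorem \ref{Frobenius}), and in the third case \cite[Proposition 4.12]{Choi1} gives $\textrm{ass}(G) \geq \tfrac{7}{4}$ directly, so $AMZA(G) \geq \textrm{ass}(G) \geq \tfrac{7}{4}$ with no character-table computation. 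Your problematic subcase falls into this third bucket (or into the $D_{2p}$ bucket), so replacing your planned Clifford-theory computation with the minorant bound closes the gap; without it, the proof is incomplete.
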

\begin{proof}
By taking sufficiently large enough quotients of $G$, we can assume without loss of generality that $G$ is non-abelian but possesses no non-abelian proper quotients. As noted in \cite[Lemma 4.4 and Theorem 4.5]{Choi1}, there are three possibilities:
\begin{itemize}
    \item $G$ has a non-trivial centre
    \item $G$ has a trivial centre and a conjugacy class of size $2$
    \item $G$ has a trivial centre and no conjugacy classes of size $2$.
\end{itemize}
The first two options correspond with $G$ either being a two conjugacy class size and two character degree group, or being isomorphic $D_{2p}$ for some odd prime $p$. Theorem \ref{Theorem: cd and cc = 2} and Theorem \ref{Frobenius} apply respectively, which shows that $AMZL(G) = AMZA(G)$ in those cases. If $G$ has a trivial centre and no conjugacy classes of size $2$ then \cite[Proposition 4.12]{Choi1} yields that $AMZA(G) \geq \textrm{ass}(G) \geq \frac{7}{4}$.
\end{proof}
 It can be shown computationally that all groups of order less than $192$ satisfy the conditions of Theorem \ref{Theorem: Q}. The above prompts the question: can this argument apply to every finite group? As shown by the next example, this is not the case.
\begin{example}\label{example: quotient}
Let $G = \text{SmallGroup}(192,1022)$ and choose $N \cong C_2$ in $G$ such that $G/N \cong \text{SmallGroup}(96,204)$, then $AMZA(G) = 13.4921875$ and $AMZA(G/N) = 15.53125$. This example also demonstrates that auxiliary minorant of $AMZ$ doesn't always respect quotients. For these choices of $G$ and $N$ we have that $\textrm{ass}(G) = 7.2109375$ and $\textrm{ass}(G/N) = 8.265625$.
\end{example}

\section{Further Questions to Explore}
\begin{question}\label{question: AMZA =/ AMZ}
For which groups does $AMZA(G) \neq AMZL(G)$ hold? 
\end{question}
This behavior seems to be relatively uncommon and only appears to be possible for specific orders. Calculations in GAP show that of the 851 non-abelian groups with order less than $100$ only $173$ of them have the property that $AMZL(G) \neq AMZL(G)$, and the only orders achieved by these groups are $24, 48, 60, 64, 72, 80, 96$. Interestingly the smallest odd order group with differing amenability constants for $ZA(G)$ and $ZL^1(G)$ is SmallGroup(567, 16). It appears that $AMZA(G) \neq AMZL(G)$ often holds when $G$ is a non-monomial group (that is, $G$ contains an irreducible character that is not induced from a linear character on a subgroup). There are $24$ non-monomial groups of order less than $100$, and $21$ of them have the property that $AMZA(G) \neq AMZL(G)$. 
\begin{question}
Does $AMZA(G/N) \leq AMZA(G)$ hold if $N$ is a normal Hall subgroup of $G$?
\end{question}
\begin{question}
For which groups is $AMZA(G) = \frac{7}{4}$ actually achieved? Is it necessary that $G$ be nilpotent?
\end{question}
\begin{question}
Is it true that $AMZA(G') \leq AMZA(G)$, where $G'$ is the derived subgroup of $G$?
\end{question}
\begin{question}
Are there other gaps respected by amenability constants besides around $1$? 
\end{question}
\subsection*{Acknowledgements}
The author is thankful to Yemon Choi for helpful discussions about these topics during the author's trip to the University of Lancaster in July 2022.

\bibliography{references}

\providecommand{\bysame}{\leavevmode\hbox to3em{\hrulefill}\thinspace}
\providecommand{\MR}{\relax\ifhmode\unskip\space\fi MR }
\providecommand{\MRhref}[2]{%
  \href{http://www.ams.org/mathscinet-getitem?mr=#1}{#2}
}
\providecommand{\href}[2]{#2}
\begin{thebibliography}{10}

\bibitem{Alag3}
M.~Alaghmandan, \emph{Amenability notions of hypergroups and some applications
  to locally compact groups}, Math. Nachr. \textbf{290} (2017), no.~14-15,
  2088--2099. \MR{3711772}

\bibitem{Alag4}
M.~Alaghmandan and M.~Amini, \emph{Dual space and hyperdimension of compact
  hypergroups}, Glasg. Math. J. \textbf{59} (2017), no.~2, 421--435.
  \MR{3628938}

\bibitem{Alag1}
M.~Alaghmandan, Y.~Choi, and E.~Samei, \emph{Z{L}-amenability constants of
  finite groups with two character degrees}, Canad. Math. Bull. \textbf{57}
  (2014), no.~3, 449--462. \MR{3239107}

\bibitem{Alag2}
M.~Alaghmandan and N.~Spronk, \emph{Amenability properties of the central
  {F}ourier algebra of a compact group}, Illinois J. Math. \textbf{60} (2016),
  no.~2, 505--527. \MR{3680545}

\bibitem{Azimifard}
A.~Azimifard, E.~Samei, and N.~Spronk, \emph{Amenability properties of the
  centres of group algebras}, J. Funct. Anal. \textbf{256} (2009), no.~5,
  1544--1564. \MR{2490229}

\bibitem{Choi1}
Y.~Choi, \emph{A gap theorem for the {ZL}-amenability constant of a finite
  group}, Int. J. Group Theory \textbf{5} (2016), no.~4, 27--46. \MR{3490226}

\bibitem{Choi2}
\bysame, \emph{An explicit minorant for the amenability constant of the fourier
  algebra}, 2020, arXiv:1410.5093.

\bibitem{Eymard}
P.~Eymard, \emph{L'alg\`ebre de {F}ourier d'un groupe localement compact},
  Bull. Soc. Math. France \textbf{92} (1964), 181--236. \MR{228628}

\bibitem{Fernandez}
G.A. Fern\'{a}ndez-Alcober and A.~Moret\'{o}, \emph{On the number of conjugacy
  class sizes and character degrees in finite {$p$}-groups}, Proc. Amer. Math.
  Soc. \textbf{129} (2001), no.~11, 3201--3204. \MR{1844993}

\bibitem{GAP4}
The GAP~Group, \emph{{GAP -- Groups, Algorithms, and Programming, Version
  4.11.1}}, 2021.

\bibitem{Ghandehari}
M.~Ghandehari, H.~Hatami, and N.~Spronk, \emph{Amenability constants for
  semilattice algebras}, Semigroup Forum \textbf{79} (2009), no.~2, 279--297.
  \MR{2538726}

\bibitem{Isaacs}
M.~Isaacs, \emph{Character theory of finite groups}, Pure and Applied
  Mathematics, No. 69, Academic Press [Harcourt Brace Jovanovich, Publishers],
  New York-London, 1976. \MR{0460423}

\bibitem{Johnson1}
B.E. Johnson, \emph{Cohomology in {B}anach algebras}, Memoirs of the American
  Mathematical Society, No. 127, American Mathematical Society, Providence,
  R.I., 1972. \MR{0374934}

\bibitem{Johnson2}
\bysame, \emph{Non-amenability of the {F}ourier algebra of a compact group}, J.
  London Math. Soc. (2) \textbf{50} (1994), no.~2, 361--374. \MR{1291743}

\bibitem{Juselius}
N.~Juselius, \emph{Bounds on the amenability constants of fourier algebras in
  the completely bounded multiplier norm}, Master's thesis, University of
  Alberta, 2019.

\bibitem{Kaniuth}
E.~Kaniuth and A{.}T{.} Lau, \emph{Fourier and {F}ourier-{S}tieltjes algebras
  on locally compact groups}, Mathematical Surveys and Monographs, vol. 231,
  American Mathematical Society, Providence, RI, 2018. \MR{3821506}

\bibitem{Rider}
D.~Rider, \emph{Central idempotent measures on compact groups}, Transactions of
  the American Mathematical Society \textbf{186} (1973), 459--479.

\bibitem{Runde1}
V.~Runde, \emph{The amenability constant of the {F}ourier algebra}, Proc. Amer.
  Math. Soc. \textbf{134} (2006), no.~5, 1473--1481. \MR{2199195}

\bibitem{Runde2}
\bysame, \emph{Amenable {B}anach algebras}, Springer Monographs in Mathematics,
  Springer-Verlag, New York, 2020, A panorama. \MR{4179584}

\bibitem{Sambale}
B.~Sambale, \emph{The reciprocal character of the conjugation action}, Publ.
  Math. Debrecen \textbf{99} (2021), no.~1-2, 243--260. \MR{4294368}

\end{thebibliography}
\bibliographystyle{amsplain}
\renewcommand*{\bibname}{References}

\end{document}